\documentclass[12pt]{amsart}

\usepackage{fullpage}
\usepackage{amsmath}
\usepackage{amssymb}
\usepackage{amsthm}
\usepackage{amstext}
\usepackage{appendix}
\usepackage{perpage}
\usepackage{color}
\usepackage{xcolor}
\usepackage{tikz-cd}
\usepackage[colorlinks=true,linkcolor=blue,citecolor=blue]{hyperref}

\newtheorem{thm}{Theorem}[section]
\newtheorem{lemm}[thm]{Lemma}

\newtheorem{prop}[thm]{Proposition}
\newtheorem{conj}[thm]{Conjecture}

\theoremstyle{definition}
\newtheorem{expl}[thm]{Example}
\newtheorem{defi}[thm]{Definition}
\newtheorem{remark}[thm]{Remark}

\begin{document}

\title{Generic character sheaves on reductive groups over a finite ring}

\author{Zhe Chen}

\address{Department of Mathematics, Shantou University, Shantou, China}

\email{zhechencz@gmail.com}

\begin{abstract}
In this paper we propose a construction of generic character sheaves on reductive groups over finite local rings at even levels, whose characteristic functions are higher Deligne--Lusztig characters when the parameters are generic. We formulate a conjecture on the simple perversity of these complexes, and we prove it in the level two case (thus generalised a result of Lusztig from the function field case). We then discuss the induction and restriction functors, as well as the Frobenius reciprocity, based on the perversity.
\end{abstract}

\maketitle

\tableofcontents

\section{Introduction}

The theory of character sheaves, initiated by Lusztig in \cite{Lusztig_CharSh_I} for connected reductive groups over an algebraically closed field, is a geometric theory of characters. In this paper we propose a construction of generic character sheaves for connected reductive groups over $\mathcal{O}^{\mathrm{ur}}/\pi^r$, where $\mathcal{O}$ is a complete discrete valuation ring with a finite residue field $\mathbb{F}_q$, $\mathcal{O}^{\mathrm{ur}}$ its maximal unramified extension, $\pi$ a uniformiser, and $r$ a fixed arbitrary positive even integer. In the prior work \cite{Lusz_CharSh_Generalizations}, Lusztig considered the principal series case with $\mathcal{O}=\mathbb{F}_q[[\pi]]$ (but without any restriction on $r$); his construction is based on the use of a natural analogue of the Grothendieck--Springer resolution. In this paper, we will replace the resolution morphism by another morphism, based on the \emph{arithmetic radical} introduced in \cite{ChenStasinski_2016_algebraisation} (see Section~\ref{Deligne--Lusztig theory}); this replacement allows us to deal with any series, not only the principal series, and as one shall see, some methods in the $r=1$ case are also available in our construction. In the below we describe the motivation of our construction.

\vspace{2mm} In 1979, Lusztig \cite{Lusztig1979SomeRemarks} proposed a geometric method to study the ordinary representations of reductive groups over $\mathcal{O}/\pi^r$, which generalises Deligne and Lusztig's seminal work \cite{DL1976} (which corresponds to the case $r=1$). The proofs of some results in \cite{Lusztig1979SomeRemarks} were later established in the function field case by Lusztig himself in \cite{Lusztig2004RepsFinRings}, and Stasinski \cite{Sta2009Unramified} generalised this work from the function field case to the general case, by the use of Greenberg functor technique. Meanwhile, when $r\geq 2$, there also exists an algebraic method to construct certain irreducible representations of these groups, due to G\'{e}rardin \cite{Gerardin1975SeriesDiscretes}. The geometrically constructed representations and the algebraically constructed representations have the same set of parameters when some regularity condition is satisfied, and Lusztig raised the problem on whether these two style representations actually coincide. In the case of even $r$, a solution to this problem was given for $\mathrm{GL}_n$ in \cite{ZheChen_PhDthesis}, which was generalised to the general case in \cite{ChenStasinski_2016_algebraisation}; we recall this result in Section~\ref{Deligne--Lusztig theory}. This result suggests that a character sheaf theory for $r>1$ may be established based on G\'{e}rardin's constructions.

\vspace{2mm} In Section~\ref{abelian groups}, we recall the character sheaves on abelian groups following \cite[5]{Lusz_CharSh_Generalizations}. In Section~\ref{generic character sheaves}, we define the generic character sheaves at even levels (i.e.\ $r$ even), and then compare their characteristic functions with the characters of even level Deligne--Lusztig representations. In Section~\ref{section:simple perversity} we state the conjecture on the simple perversity, and prove it for $r=2$ (see Theorem~\ref{main result}), which generalised a result of Lusztig in the function field case in \cite{Lusztig_2015_generic_CharSh}. In Section~\ref{ind and res}, we define the induction and the restriction functors, and study their transitive properties. In Section~\ref{frobenius reci}, we study the Frobenius reciprocity between the induction and the restriction functors. In the final Section~\ref{further remarks}, we give some remarks in the function field case. 

\vspace{2mm} Conventions: Denote by $k$ the residue field of $\mathcal{O}^{\mathrm{ur}}$. By a variety we always mean a reduced quasi-projective variety over $k$. For two elements $x$ and $y$ in an algebraic group, we use $x^y$ short for $y^{-1}xy$. For a product variety $X_1\times\cdots\times X_n$, we denote by $\pi_{X_i}$ the projection to $X_i$. For an algebraic group $G$ we denote by $G^o$ the identity component of $G$.

\vspace{2mm}\noindent {\bf Acknowledgement.} The author thanks Anne-Marie Aubert, Yongqi Feng, and Alexander Stasinski for helpful discussions. Some results in this work are in the author's Durham thesis. During the preparation of this work, the author is partially supported by the STU funding NTF17021.

\section{Generic characters and Deligne--Lusztig theory}\label{Deligne--Lusztig theory}

Let $\mathbb{G}$ be an affine smooth group scheme over $\mathcal{O}/\pi^r$, and let $\mathbf{G}$ be its base change to $\mathcal{O}^{\mathrm{ur}}/\pi^r$. There is an associated algebraic group $G=G_r=\mathcal{F}\mathbf{G}$ over $k$, where $\mathcal{F}$ is the Greenberg functor introduced in \cite{Greenberg19611} and \cite{Greenberg19632}. We recall some basic properties in this setting (see \cite{Sta2012ReductiveGr} and \cite{Sta2009Unramified} for the details): The Frobenius element in $\mathrm{Gal}(k/\mathbb{F}_q)$ gives a rational structure of $G$ over $\mathbb{F}_q$; we denote the associated geometric Frobenius by $F$. One has
\begin{equation*}
\mathbb{G}(\mathcal{O}/\pi^r)\cong G^F\quad \text{and} \quad \mathbf{G}(\mathcal{O}^{\mathrm{ur}}/\pi^r)\cong G(k)
\end{equation*}
as abstract groups. For any positive integer $i\leq r$, being the kernel of the reduction map modulo $\pi^i$ is a closed condition, hence defines a normal closed subgroup $G^i$ of $G$; we denote the quotient group $G/G^i$ by $G_i$. For convenience we put $G^0=G$. Similar notation also applies to closed subgroups of $G$. 

\vspace{2mm} From now on let $\mathbb{G}$ be a reductive group over $\mathcal{O}/\pi^r$; by this we mean it is an affine smooth group scheme over $\mathcal{O}/\pi^r$, with the geometric fibres being connected reductive groups in the usual sense. Let $\mathbf{T}$ be a maximal torus of $\mathbf{G}$ such that $T=\mathcal{F}\mathbf{T}$ is $F$-stable. Let $\mathbf{B}$ be a Borel subgroup of $\mathbf{G}$ containing $\mathbf{T}$; one has the Levi decomposition $\mathbf{B}=\mathbf{T}\mathbf{U}$, where $\mathbf{U}$ is the unipotent radical of $\mathbf{B}$. Let $B$ (resp.\ $U$) be the Greenberg functor image of $\mathbf{B}$ (resp.\ $\mathbf{U}$). The \emph{Deligne--Lusztig variety} associated to $T$ and $U$ is (see \cite{Lusztig2004RepsFinRings} and \cite{Sta2009Unramified})
\begin{equation*}
L^{-1}(FU):=\{g\in G\mid g^{-1}F(g)\in FU\}.
\end{equation*}
Note that $L^{-1}(FU)$ admits a left action of $G^F$ and a right action of $T^F$, and these two actions commute. They induce actions on the compactly supported cohomology groups $H^i_c(L^{-1}(FU),\overline{\mathbb{Q}}_{\ell})$ for every $i\in\mathbb{Z}$ (here $\ell$ is a fixed arbitrary prime not equal to $\mathrm{char}(k)$). For $\theta\in\widehat{T^F}=\mathrm{Hom}(T^F,\overline{\mathbb{Q}}_{\ell}^{\times})$, the $G^F$-module
\begin{equation*}
R_{T,U}^{\theta}=\sum_i(-1)^{i}H^i_c(L^{-1}(FU),\overline{\mathbb{Q}}_{\ell})_{\theta},
\end{equation*}
where the subscript $\theta$ means taking the $\theta$-isotypical part, is called a \emph{Deligne--Lusztig representation}. Note that this is a virtual representation over $\overline{\mathbb{Q}}_{\ell}$.

\vspace{2mm} In the remaining part of this paper we assume that $r=2l$ is even (do not mix $l$ with the prime $\ell$). Let $\mathbf{U}^{-}$ be the unipotent radical of the opposite Borel subgroup of $\mathbf{B}$, and denote by $U^{-}$ the corresponding Greenberg functor image. The commutative unipotent group $U^{\pm}:=(U^{-})^lU^l$ is called the arithmetic radical of $T$; it is $F$-stable (see \cite{ChenStasinski_2016_algebraisation}). For $\alpha\in\Phi$ a root of $\mathbf{T}$, we write $T^{\alpha}=\mathcal{F}\mathbf{T}^{\alpha}$, where $\mathbf{T}^{\alpha}$ denotes the image of the coroot $\check{\alpha}$. We call $W(T):=W(T_1)$ the Weyl group of $T$. More details on these concepts can be found in \cite{SGA3}. We recall some conditions on the characters of $T^F$  considered in \cite{Lusztig2004RepsFinRings}, \cite{Sta2009Unramified}, and \cite{ChenStasinski_2016_algebraisation}.

\begin{defi}
Let $\theta\in\widehat{T^F}$. 
\begin{itemize}
\item[(i)] If $\theta$ is not stabilised by any non-trivial element in $W(T)^F$, then $\theta$ is said to be in \emph{general position}.

\item[(ii)] Let $a$ be a positive integer such that $(T^{\alpha})^{r-1}$ is $F^a$-stable for every root $\alpha\in\Phi$. Then $\theta$ is said to be \emph{regular} if, for every root $\alpha\in\Phi$, there is some $t\in((T^{\alpha})^{r-1})^{F^a}$ such that $\theta(tF(t)...F^{a-1}(t))\neq1$. The regularity does not depend on the choice of $a$.

\item[(iii)] Let $\widetilde{\theta}$ be the trivial extension of $\theta$ to $(TU^{\pm})^F$, then $\theta$ is said to be \emph{generic} if it is regular, in general position, and $\mathrm{Stab}_{G^F}(\widetilde{\theta}|_{(G^l)^F})=(TU^{\pm})^F\cdot\mathrm{Stab}_{N(T)^F}(\widetilde{\theta}|_{(G^l)^F})$.
\end{itemize}
\end{defi}

We remark that the genericity is very close to the regularity, and it is actually a natural Lie algebra condition; see \cite[3.6 and 3.7]{ChenStasinski_2016_algebraisation}. 

\vspace{2mm} We have the following algebraisation result.

\begin{prop}\label{algebraisation theorem}
If $\theta\in \widehat{T^F}$ is regular and in general position, then $R_{T,U}^{\theta}$ is a subrepresentation of $\mathrm{Ind}_{(TU^{\pm})^F}^{G^F}\widetilde{\theta}$. If moreover $\theta$ is generic, then $R_{T,U}^{\theta}\cong\mathrm{Ind}_{(TU^{\pm})^F}^{G^F}\widetilde{\theta}$.
\end{prop}
\begin{proof}
See \cite{ChenStasinski_2016_algebraisation}.
\end{proof}

The representations of the form $\mathrm{Ind}_{(TU^{\pm})^F}^{G^F}\widetilde{\theta}$ were first studied by G\'{e}rardin \cite{Gerardin1975SeriesDiscretes}.

\section{Character sheaves on abelian groups}\label{abelian groups}

In this section we recall the setting of character sheaves on commutative algebraic groups in \cite[5]{Lusz_CharSh_Generalizations}, with a focus on $T$. 

\vspace{2mm} Fix an arbitrary positive integer $m$, let ${L'}$ be the Lang endomorphism associated to $F^m$ on $T$. There is a direct sum decomposition of $\overline{\mathbb{Q}}_{\ell}$-sheaves on $T$
\begin{equation*}
{L'}_*\overline{\mathbb{Q}}_{\ell}=\bigoplus_{\psi\in\widehat{T^{F^m}}}E^{\psi},
\end{equation*}
where $E^{\psi}$ is a locally constant $\overline{\mathbb{Q}}_{\ell}$-sheaf of rank $1$, whose stalk at $t\in T$ is the $1$-dimensional $\overline{\mathbb{Q}}_{\ell}$-representation space of $T^{F^m}$ given by $\psi$:
\begin{equation*}
E^{\psi}_t=\{f\colon {L'}^{-1}(t)\rightarrow \overline{\mathbb{Q}}_{\ell}\mid f(t_1t_2)=\psi(t_1)f(t_2), \forall t_1\in T^{F^m}, t_2\in {L'}^{-1}(t)\}.
\end{equation*}
There is a unique isomorphism of locally constant $\overline{\mathbb{Q}}_{\ell}$-sheaves $\varphi\colon (F^m)^*E^{\psi}\cong E^{\psi}$, such that at stalks $E^{\psi}_{F^m(t)}\rightarrow E^{\psi}_t$ it is $\varphi_t\colon f\mapsto f\circ F^m$. Note that if $t\in T^{F^m}$, then for $y\in {L'}^{-1}(t)$ one has $F^m(y)=ty$, so $f\circ F^m(y)=\psi(t)f(y)$ . 

\begin{defi}
Let $\mathcal{S}(T)$ be the set of all $E^{\psi}$, for various $m$ and $\psi$.
\end{defi}

\section{Generic character sheaves}\label{generic character sheaves}

The algebraisation of the generic even level Deligne--Lusztig representations (see Proposition~\ref{algebraisation theorem}) suggests that one can develop a generic character sheaf theory for reductive groups over $\mathcal{O}^{\mathrm{ur}}/\pi^r$, $r$ even, based on G\'{e}rardin's representations. Consider the diagram
\begin{equation}\label{basic diagram}
\begin{tikzcd}
T & Z_T \arrow{l}[swap]{b} \arrow{r}{a} & G
\end{tikzcd},
\end{equation}
where $Z_T:=\{(g, xTU^{\pm})\in G\times G/TU^{\pm}\mid g^x\in {TU^{\pm}}\}$; here $a$ is the natural left projection, and $b$ is the projection of $g^x\in TU^{\pm}$ to $\pi_T(g^x)\in T$ .

\begin{lemm}\label{rationality of Z_T and a,b}
The variety $Z_T$, as well as the morphisms $a$ and $b$, are all $F$-stable.
\end{lemm}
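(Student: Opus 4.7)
The plan is simply to verify that every ingredient in the definitions of $Z_T$, $a$, and $b$ is $F$-equivariant, so that all three objects descend to $\mathbb{F}_q$.

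First, $T$ is $F$-stable by hypothesis and $U^{\pm}=(U^-)^l U^l$ is $F$-stable as recalled in Section~\ref{Deligne--Lusztig theory}. Since $T$ normalises $U$ and $U^{-}$ (hence also their $l$-th congruence subgroups), it normalises $U^{\pm}$, so $TU^{\pm}$ is a closed subgroup of $G$; as the product of two $F$-stable closed subgroups it is itself $F$-stable, which endows $G/TU^{\pm}$ with an $\mathbb{F}_q$-structure. The defining condition $g^{x}\in TU^{\pm}$ of $Z_T$ is then preserved by $F$, because $F$ commutes with conjugation and $TU^{\pm}$ is $F$-stable; thus $Z_T$ is a closed $F$-stable subvariety of $G\times G/TU^{\pm}$, i.e.\ defined over $\mathbb{F}_q$. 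The morphism $a$, being the restriction of a coordinate projection, is automatically $\mathbb{F}_q$-rational.

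The only step requiring a little care is the rationality of $b$. For this I would first show that $T\cap U^{\pm}=\{1\}$, so that the product map $T\times U^{\pm}\to TU^{\pm}$ is an isomorphism of $\mathbb{F}_q$-varieties and the projection $\pi_T$ is a well-defined $\mathbb{F}_q$-morphism. Since $r=2l$ we have $[G^l,G^l]\subset G^{2l}=1$, so $G^l$ is commutative unipotent; combining this with $U^{\pm}\subset G^l$ and $T\cap G^l=T^l$, the root-space (big-cell) analysis of $\mathbf{G}$ at congruence level $l$ yields a direct-product decomposition of $G^l$ into its $T^l$, $(U^-)^l$, $U^l$ parts, whence $T^l\cap U^{\pm}=\{1\}$. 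A short calculation, using that $T$ is commutative and normalises $U^{\pm}$, then shows that for $h\in TU^{\pm}$ the element $g^{xh}$ has the same $T$-component as $g^x$, so $b(g,xTU^{\pm}):=\pi_T(g^x)$ is well defined; $F$-equivariance of $b$ follows from $F$-equivariance of conjugation and of $\pi_T$.

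The one nontrivial technical point is the direct-product decomposition of $G^l$ at congruence level $l$, which simultaneously supplies the well-definedness of $\pi_T$ as a morphism and its compatibility with $F$; once this is in hand, everything else is a routine bookkeeping of which subvarieties and morphisms remain stable under the Frobenius.
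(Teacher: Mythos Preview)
Your argument is correct and follows the same route as the paper: the key (and only nontrivial) input is the $F$-stability of $U^{\pm}$, from which the $\mathbb{F}_q$-rationality of $TU^{\pm}$, of $G/TU^{\pm}$, of the closed condition defining $Z_T$, and of the projections $a$ and $b$ all follow by naturality. The paper records exactly this in one line.

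Your write-up goes further by also verifying that $b$ is well defined as a morphism---i.e.\ that $T\cap U^{\pm}=\{1\}$ so that $\pi_T$ is a genuine $\mathbb{F}_q$-morphism, and that $\pi_T(g^{xh})=\pi_T(g^x)$ for $h\in TU^{\pm}$. Strictly speaking this is part of the \emph{definition} of $b$ preceding the lemma rather than its rationality, but it is a reasonable thing to spell out, and your argument (via the direct-product decomposition of $G^l$ and the fact that $T$ is commutative and normalises $U^{\pm}$) is sound.
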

\begin{proof}
This follows from the fact that $U^{\pm}$ is $F$-stable.
\end{proof}

\begin{lemm}
The variety $Z_T$ is smooth and connected.
\end{lemm}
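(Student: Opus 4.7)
My plan is to exhibit $Z_T$ as a Zariski-locally trivial fibration over $G/TU^{\pm}$ with fiber isomorphic to $TU^{\pm}$, so that smoothness and connectedness of $Z_T$ follow from those of the base and the fiber.

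First I would consider the second projection $\pi \colon Z_T \to G/TU^{\pm}$, $(g, xTU^{\pm}) \mapsto xTU^{\pm}$. Its scheme-theoretic fiber over $xTU^{\pm}$ is the conjugate subgroup $x(TU^{\pm})x^{-1}$, which is isomorphic to $TU^{\pm}$ via conjugation by $x$. To obtain local triviality I would use that the quotient map $G \to G/TU^{\pm}$ is a $TU^{\pm}$-torsor and therefore admits sections $\sigma \colon V \to G$ over the opens $V$ of a suitable cover of $G/TU^{\pm}$; for such a section, the assignment $(v, u) \mapsto (\sigma(v) u \sigma(v)^{-1}, v)$ defines an isomorphism $V \times TU^{\pm} \xrightarrow{\sim} \pi^{-1}(V)$. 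Equivalently, one can present $Z_T$ as the associated bundle $G \times^{TU^{\pm}} TU^{\pm}$, where $TU^{\pm}$ acts on itself by conjugation.

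Next I would verify that $G/TU^{\pm}$ and $TU^{\pm}$ are smooth and connected. Since $\mathbf{G}$ is smooth with connected reductive geometric fibres, standard properties of the Greenberg functor (see \cite{Sta2009Unramified}) yield that $G = \mathcal{F}\mathbf{G}$ is smooth and connected as an algebraic group over $k$. The same reasoning applied to $\mathbf{T}$, $\mathbf{U}$ and $\mathbf{U}^-$, together with the fact that the level-$l$ subgroups $U^l$ and $(U^-)^l$ are smooth connected closed subgroups of $G$, implies that $TU^{\pm} = T \cdot (U^-)^l \cdot U^l$ is a smooth connected closed subgroup, and hence so is the quotient $G/TU^{\pm}$.

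The only place that really deserves care is checking that the set-theoretic product $T \cdot (U^-)^l \cdot U^l$ is an honest smooth connected closed subgroup of $G$; this reduces to the observation already recorded in Section~\ref{Deligne--Lusztig theory} that $U^{\pm} = (U^-)^l U^l$ is a commutative unipotent $F$-stable subgroup of $G$ (which comes from the commutator relations in $\mathbf{G}$ modulo $\pi^r$) together with the fact that $T$ normalises both $U^l$ and $(U^-)^l$. Once this is in place, the fibration argument above is formal and gives both smoothness and connectedness of $Z_T$.
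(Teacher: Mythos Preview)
Your argument is correct and closely related to the paper's, but the paper takes a slightly slicker route. Instead of trivializing $Z_T$ Zariski-locally over $G/TU^{\pm}$, the paper pulls $Z_T$ back along the faithfully flat map $G\times G\to G\times G/TU^{\pm}$ to obtain $\widetilde{Z_T}=\{(g,x)\in G\times G\mid g^x\in TU^{\pm}\}$, which is \emph{globally} isomorphic to $TU^{\pm}\times G$ via the change of variables $(g,x)\mapsto (g^x,x)$; smoothness and connectedness of $Z_T$ then follow by faithfully flat descent (EGA~IV, 17.7.7). This is essentially your associated-bundle description $Z_T\cong G\times^{TU^{\pm}}TU^{\pm}$ read the other way around.

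The one place where your version asks for more is the sentence ``is a $TU^{\pm}$-torsor and therefore admits sections $\sigma\colon V\to G$ over the opens $V$ of a suitable cover'': torsors under smooth group schemes are automatically \'etale-locally trivial, but not Zariski-locally trivial in general, so your opening claim of a \emph{Zariski}-locally trivial fibration needs the big-cell argument (the open immersion $\mathbf{U}^-\times\mathbf{U}\times\mathbf{T}\hookrightarrow\mathbf{G}$, as in \cite[XXVI 4.3.2]{SGA3}) that the paper invokes only later, in Section~\ref{ind and res}. The paper's descent argument sidesteps this entirely, since smoothness and connectedness already descend along the faithfully flat map $\widetilde{Z_T}\to Z_T$. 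Either way the proof goes through; the paper's version is just a bit more self-contained at this point.
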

\begin{proof}
This can be proved in a way similar to \cite[2.5.2]{Lusztig_CharSh_I}. Consider the faithful flat base change $\widetilde{Z_T}$ of $Z_T\subseteq G\times G/TU^{\pm}$ along
$$G\times G\rightarrow G\times G/TU^{\pm}.$$ 
Then it suffices to show that $\widetilde{Z_T}=\{(g,x)\in G\times G\mid g^x\in TU^{\pm}\}\subseteq G\times G$ is smooth and connected (see \cite[17.7.7]{grothendieck_elements_1967}). By the variable change $b=g^x$ we get
\begin{equation*}
\widetilde{Z_T}\cong \{(b,x)\in G\times G\mid b\in TU^{\pm}\}=TU^{\pm}\times G,
\end{equation*}
which is smooth and connected.
\end{proof}

Given a variety $X$ over $k$, we write $\mathcal{D}(X)$ short for $D^b_c(X,\overline{\mathbb{Q}}_{\ell})$, the bounded derived category of constructible $\overline{\mathbb{Q}}_{\ell}$-sheaves constructed in \cite{deligne_conjecture_1980}.

\vspace{2mm} Back to the diagram \eqref{basic diagram}. For $m\in\mathbb{Z}_{>0}$ and $\theta\in\widehat{T^{F^m}}$, take $E^{\theta}\in\mathcal{S}(T)$, and put $K^{\theta}:=Ra_!(b^*E^{\theta})\in\mathcal{D}(G)$.

\begin{defi}
We call the complexes $K^{\theta}$, for various generic characters $\theta\in \widehat{T^{F^m}}$ and various $m\in\mathbb{Z}_{>0}$, the \emph{generic character sheaves} on $G$.
\end{defi}

Let $\theta\in\widehat{T^{F^m}}$ be a generic character. The isomorphism $\varphi\colon (F^{m})^*E^{\theta}\cong E^{\theta}$ induces an isomorphism $\varphi\colon(F^m)^*K^{\theta}\cong K^{\theta}$. We want to evaluate the \emph{characteristic function} of the complex $K^{\theta}$ with respect to $\varphi$, i.e.\ evaluate
\begin{equation*}
\chi_{K^{\theta},\varphi}(g)=\sum_{i\in\mathbb{Z}}(-1)^i\cdot\mathrm{Tr}(\varphi_g,\mathcal{H}^i(K^{\theta})_g)
\end{equation*}
for $g\in G^{F^m}$:

\begin{prop}
Along with the above notation, we have $$\chi_{K^{\theta},\varphi}(g)=\mathrm{Tr}(g,R_{T,U}^{\theta})$$
for any $g\in G^{F^m}$. (Here $R_{T,U}^{\theta}$ is defined with respect to $F^m$, not $F$.)
\end{prop}
\begin{proof}
This argument is standard. First, by the proper base change (of $a$ along the injection $\{g\}\rightarrow G$) we have
\begin{equation}\label{4a}
\begin{split}
\chi_{Ra_!b^*E^{\theta},\varphi}(g)
&=\sum_{i\in\mathbb{Z}}(-1)^i\cdot\mathrm{Tr}\left(\varphi_g,\mathcal{H}^i(Ra_!(b^*E^{\theta}))_g\right)\\
&=\sum_{i\in\mathbb{Z}}(-1)^i\cdot\mathrm{Tr}\left(\varphi,{H}_c^i(a^{-1}(g),b^*E^{\theta})\right).
\end{split}
\end{equation}
By applying Grothendieck's Lefschetz trace formula (see \cite[Rapport-3.2]{SGA412}) we get
\begin{equation*}
\begin{split}
\eqref{4a}
&=\sum_{xTU^{\pm}\in (G/TU^{\pm})^{F^m},\ (g,xTU^{\pm})\in Z_T}\mathrm{Tr}\left(\varphi,b^*E^{\theta}_{(g,xTU^{\pm})}\right)\\
&=\sum_{xTU^{\pm}\in (G/TU^{\pm})^{F^m},\ g^x\in TU^{\pm}}\mathrm{Tr}\left(\varphi,E^{\theta}_{\pi_T(g^x)}\right)\\
&=\frac{1}{|(TU^{\pm})^{F^m}|}\cdot\sum_{x\in G^{F^m},\ g^x\in TU^{\pm}}\theta\left(\pi_T(g^x)\right).
\end{split}
\end{equation*}
This is, by definition of induced characters, the character value of $\mathrm{Ind}_{(TU^{\pm})^{F^m}}^{G^{F^m}}\widetilde{\theta}$ at $g$. Now the assertion follows from Proposition~\ref{algebraisation theorem}.
\end{proof}

\section{Simple perversity}\label{section:simple perversity}

For a variety $X$ over $k$, let $\mathcal{D}^{\leq0}(X)$ be the full subcategory of $\mathcal{D}(X)$ consisting of the objects $K$ satisfying: The support of $\mathcal{H}^i(K)$ has dimension $\leq -i$ for any integer $i$ (in particular, $\mathcal{H}^i(K)=0$ if $i>0$, and the support of $\mathcal{H}^0(K)$ is a finite set). Meanwhile, let $\mathcal{D}^{\geq0}(X)$ be the full subcategory of $\mathcal{D}(X)$ consisting of the objects $K$ such that $\mathbb{D}_X(K)\in\mathcal{D}^{\leq0}(X)$, where $\mathbb{D}_X$ denotes the Verdier duality functor on $\mathcal{D}(X)$. The category of perverse sheaves is the full subcategory $\mathcal{M}(X):=\mathcal{D}^{\leq0}(X)\cap\mathcal{D}^{\geq0}(X)$.

\begin{defi}
Let $\theta\in\widehat{T^F}$. We call $\theta$ geometrically generic, if it extends to a generic character of the abelian group $T^{F^m}$ for every $m\in\mathbb{Z}_{>0}$.
\end{defi}

We remark that (i) it seems that geometric genericity actually coincides with genericity, and (ii) when $\mathrm{char}(\mathbb{F}_q)$ is big enough, being geometrically generic equivalents to corresponding to a regular semisimple element in the Lie algebra; see \cite[3.6]{ChenStasinski_2016_algebraisation} for more details.

\vspace{2mm} Now we can state the conjecture (due to Lusztig when $\mathcal{O}=\mathbb{F}_q[[\pi]]$):

\begin{conj}
If $\theta$ is geometrically generic, then $K^{\theta}[\dim G]$ is a simple perverse sheaf.
\end{conj}

In the below, extending the idea of Lusztig in the function field case (in \cite{Lusztig_2015_generic_CharSh}), we prove this conjecture in the level two case, by showing that Deligne's $\ell$-adic Fourier transformation of $K^{\theta}[\dim G]$ is a simple perverse sheaf supported on a subvariety (of $G$) defined by the regular semisimple orbit (in the Lie algebra) corresponding to $\theta$.

\begin{thm}\label{main result}
The above conjecture is true for $r=2$ and $\mathrm{char}(k)$ very good for $G_1$.
\end{thm}
\begin{proof}
First, $G^1$ is a connected unipotent group isomorphic to the additive group of the Lie algebra of $G_1$ (see \cite{Greenberg19632}), hence a special group in the sense of \cite{serre_1958_special_gp}, thus $G\rightarrow G_1$ is a vector bundle of rank $\dim G^1$. We fix a local trivialisation for this principal vector bundle, and for each $g\in G$ we assign to it a fixed local piece in this trivialisation (so we can talk about the $G^1$-part and the $G_1$-part of $g$). Within this vector bundle, we will use the $\ell$-adic Fourier transform technique in \cite{Laumon_1987_Transformation_Fourier}: As $\mathrm{char}(k)$ is very good, there is a non-degenerate $G$-invariant symmetric bilinear form on $G^1$ (see e.g.\ \cite[2.5.12]{Let2005book}), which identifies the dual of $G^1$ to itself, hence produce a pairing $h(-,-)\colon G\times_{G_1} G\rightarrow \mathbb{A}^1$. The geometric genericity of $\theta$ then means there is a regular semisimple element $t_{\theta}\in (T^F)^1$, such that $\widetilde{\theta}(s)=\psi(h(t_{\theta},s))$ for all $s\in (G^1)^F$, where $\psi\colon \mathbb{F}_q\rightarrow\overline{\mathbb{Q}}_{\ell}^{\times}$ is a fixed non-trivial group morphism (\cite[3.6]{ChenStasinski_2016_algebraisation}).

\vspace{2mm} Identifying $T_1$ as the reductive part of $T$, we get $T=T_1\times T^1$; let $\mathcal{E}^{\theta}$ be the character sheaf on $T_1$ associated to $\theta|_{T_1^F}$ (in the sense of \cite[0.5]{Lusz_CharSh_Generalizations}). Meanwhile, for a morphism $f\colon X\rightarrow\mathbb{A}^1$ over $\overline{\mathbb{F}}_q$, we denote the Artin--Schreier sheaf associated to $f$ and $\psi$ by $\mathcal{L}_f$ (see e.g.\ \cite[0.3]{Lusztig_2015_generic_CharSh}); in particular, we have a local system $\mathcal{L}_{h_{\theta}}$ on $T^1$, where $h_{\theta}$ is the restriction of $h(t_{\theta},-)$ to $T^1$. Denote the pull-backs of $\mathcal{E}^{\theta}$ and $\mathcal{L}_{h_{\theta}}$ to $Z_T$ (along $b$) by $\mathcal{E}$ and $\mathcal{L}$, respectively, then we need to show that  $Ra_!(\mathcal{E}\otimes\mathcal{L})[\dim G]$ is a simple perverse sheaf. 

\vspace{2mm} Let $\pi_1$ and $\pi_2$ be the first and the second projections $G\times_{G_1}G\rightarrow G$, then by Laumon's theorem \cite[1.3.2]{Laumon_1987_Transformation_Fourier} it suffices to show that the Fourier transform of $K^{\theta}[\dim G]$, that is,
$$\hat{K}:=R(\pi_2)_!(\pi_1^*Ra_!(\mathcal{E}\otimes\mathcal{L})\otimes \mathcal{L}_h)[3\dim G^1],$$ 
is simple perverse.

\vspace{2mm} Consider the pull-back $a'$ of $a\colon Z_T\rightarrow G$ along $\pi_1$
\begin{equation*}
\begin{tikzcd}
Z_T' \arrow{d}[swap]{a'} \arrow{r}{\pi_1'} & Z_T \arrow{d}{a} \\
G\times_{G_1}G \arrow{r}{\pi_1}& G.
\end{tikzcd}
\end{equation*}
By proper base change and projection formula we get 
\begin{equation*}
\begin{split}
\hat{K}
&=R(\pi_2)_!(R(a')_!\pi_1'^*(\mathcal{E}\otimes\mathcal{L})\otimes \mathcal{L}_h)[3\dim G^1]\\
&=R(\pi_2)_!(R(a')_!(\pi_1'^*(\mathcal{E}\otimes\mathcal{L})\otimes a'^*\mathcal{L}_h))[3\dim G^1]\\
&=R(\pi_2)_!(R(a')_!(\mathcal{E}'\otimes\mathcal{L}'\otimes a'^*\mathcal{L}_h))[3\dim G^1]\\
&=R(\pi_2\circ a')_!(\mathcal{E}'\otimes\mathcal{L}_{h'})[3\dim G^1],
\end{split}
\end{equation*}
where $\mathcal{E}'$ and $\mathcal{L}'$ denote the pull-backs (along $\pi_1'$) of $\mathcal{E}$ and $\mathcal{L}$ to $Z_T'$, respectively, and $h'$ denotes the morphism $ h\circ a'+h_{\theta}\circ \pi_{T^1} \circ b\circ \pi_1' $ (here $\pi_{T^1}$ is the projection from $T$ to $T^1$).

\vspace{2mm} Note that $Z_T'$ can be written as $\{ (y,x)\in (G\times_{G_1}G)\times G/TU^{\pm} \mid \pi_1(y)^x\in TU^{\pm} \}$. As $\pi_1(y)^x\in TU^{\pm}$ is actually a condition only concerns the $G_1$-part of $\pi_1(y)$, we see that $\pi_2\times \mathrm{Id}_{G/TU^{\pm}}$ gives $Z_T'$ a vector bundle structure over $Z_T$ (with fibre $\cong G^1$); denote the restriction of $\pi_2\times \mathrm{Id}_{G/TU^{\pm}}$ to $Z_T'$ by $\mu$. Given $(g,x)\in Z_T$, identify $\mu^{-1}(g,x)$ as $G^1$ via the fixed local trivialisation, then the restriction of $h'$ on $\mu^{-1}(g,x)$ can be written as 
\begin{equation*}
h'\colon (y,x)\mapsto h(y)+ h(t_{\theta},\pi_{T^1}\circ b(\pi_1(y),x))=h({\pi_1(y)\tilde{}},{g\tilde{}})+ h((^xt_{\theta}),\pi_x\pi_1(y)),
\end{equation*}
where ${\pi_1(y)\tilde{}}$ and ${g\tilde{}}$ denote the $G^1$-parts of $\pi_1(y)$ and $g$, respectively, and $\pi_x$ denotes the projection from ${^x(TU^{\pm})}$ to ${^x(T^1)}$. However, as $h$ is symmetric, by the definition of $\widetilde{\theta}$ we see $h'(y,x)=h({g\tilde{}}\cdot(^xt_{\theta}),{\pi_1(y)\tilde{}})$. Therefore, $h'$ is zero on $\mu^{-1}(g,x)$ if and only if ${g\tilde{}}=(^xt_{\theta})^{-1}$; note that this is a closed condition, so all such $(g,x)$ form a closed subvariety $\bar{Z}\subseteq Z_T$. 

\vspace{2mm} Denote the pull-back of $\mathcal{E}$ to $\bar{Z}$ by $\bar{\mathcal{E}}$, and denote the restriction of $a$ to $\bar{Z}$ by $\bar{a}$. Since $\pi_2\circ a'=a\circ\mu$, according to \cite[1.3(b)]{Lusztig_2015_generic_CharSh} we have
\begin{equation*}
\hat{K}\cong R(\bar{a})_!(\bar{\mathcal{E}})[\dim G^1].
\end{equation*}
Meanwhile, note that $\bar{a}$ defines an isomorphism from $\bar{Z}$ to a subvariety of $G$, which is locally
\begin{equation*}
\{ (g',g'')\in V\times\mathcal{C}\mid g'\in\mathrm{Stab}_G(g'')^{o} \},
\end{equation*}
where $V\subseteq G_1$ is a local piece in the local trivialisation and $\mathcal{C}$ is the (adjoint) $G$-orbit of $(t_{\theta})^{-1}$. Recall that $\mathcal{C}\subseteq G^1$ is a closed subvariety (see \cite[III.9]{Borel_1991_LinearAlgGp}), and by the definition of regular elements it is of dimension $\dim G_1-\dim T_1$, so $\bar{Z}\subseteq G$ is a smooth closed subvariety of dimension $\dim G^1$ via $\bar{a}$ (for the smoothness, consider the projection from $\bar{Z}$ to $\mathcal{C}$). This implies that $\hat{K}\cong \bar{a}_!\bar{\mathcal{E}}[\dim G^1]$ is a simple perverse sheaf on $G$, which completes the proof.
\end{proof}

In the rest of this paper we turn to the discussion of the induction and restriction functors.

\begin{remark}\label{remark: basic properties of D(X) and M(X)}
Here we collect some basic properties of $\mathcal{D}(X)$ and $\mathcal{M}(X)$, which will be used later; the details and further properties can be found in \cite{BBD}, \cite{Lusztig_CharSh_I}, \cite{Lusztig_1994_IntroQuantum}, and \cite{Boyarchenko_Drinfeld2010CharSh}. Let $f\colon X\rightarrow Y$ be a morphism of varieties over $k$. We use $f^*$, $f^!$, $Rf_*$, and $Rf_!$ to denote the corresponding derived functors.
\begin{itemize}
\item[(P1)] Suppose that $f$ is smooth with connected fibres of dimension $d$. Then (ignore Tate twists) $f^!=f^*[2d]$. Moreover, $\widetilde{f}:=f^*[d]$ is fully faithful from $\mathcal{M}(Y)$ to $\mathcal{M}(X)$. If $K\in\mathcal{D}(Y)$, then $\widetilde{f}K\in\mathcal{D}^{\geq0}(X)$ (resp.\ $\in\mathcal{D}^{\leq0}(X)$) if and only if $K\in\mathcal{D}^{\geq0}(Y)$ (resp.\ $\in\mathcal{D}^{\leq0}(Y)$). If $K\in\mathcal{D}^{\leq0}(Y)$ and $K'\in\mathcal{D}^{\geq0}(Y)$, then $\mathrm{Hom}_{\mathcal{D}(Y)}(K,K')=\mathrm{Hom}_{\mathcal{D}(X)}(\widetilde{f}K,\widetilde{f}K')$. 

\item[(P2)] Suppose that $h\colon H\times Y\rightarrow Y$ is the action morphism of a connected algebraic group $H$ acting on $Y$. Let $\pi_Y\colon H\times Y\rightarrow Y$ be the projection morphism. Note that both $h$ and $\pi_Y$ are smooth of dimension $\dim H$. We say that $K\in\mathcal{M}(Y)$ is $H$-equivariant, if the two perverse sheaves $\widetilde{h}K$ and $\widetilde{\pi_Y}K$ are isomorphic.

\item[(P3)] Suppose that $H$ is a connected algebraic group acting freely on $X$ and trivially on $Y$, and that $f$ is locally trivial and $H$-equivariant. If $K\in \mathcal{M}(X)$, then $K$ is $H$-equivariant if and only if $K\cong\widetilde{f}K'$ for some $K'\in\mathcal{M}(Y)$.
\end{itemize}
\end{remark}

\section{Induction and restriction}\label{ind and res}

In this section we define the induction functors (from the equivariant perverse sheaves on the Levi subgroups to $\mathcal{D}(G)$) and the restriction functors (from the derived category $\mathcal{D}(G)$ to that of the Levi subgroups), and then study their transitive properties.

\vspace{2mm} Fix a parabolic subgroup $\mathbf{P}$ of $\mathbf{G}$, and let $\mathbf{M}$ be a Levi subgroup of $\mathbf{P}$ (when there is no confusion we will say that $\mathbf{M}$ is a Levi subgroup of $\mathbf{G}$); denote by $P$ and $M$ the corresponding Greenberg functor images, respectively. Write $\mathbf{U}_{\mathbf{P}}$ for the unipotent radical of $\mathbf{P}$, and write $\mathbf{U}_{\mathbf{P}}^{-}$ for the unipotent radical of the opposite parabolic subgroup; denote their Greenberg functor images by $U_P$ and $U^{-}_P$, respectively. We put $U^{\pm}_{G-M}:=(U_P)^l(U^{-}_P)^l$ and $U^{\pm}_M:=M\cap U^{\pm}$. For detailed properties of parabolic subgroups and Levi subgroups we refer to \cite[XXVI]{SGA3}.

\vspace{2mm} Now we are going to define the induction functor, which requires some technical preparations.

\vspace{2mm} Consider the varieties
\begin{equation*}
Z^G_M:=\{(g, xMU^{\pm})\in G\times G/MU^{\pm}\mid g^x\in {MU^{\pm}}\}
\end{equation*}
and 
\begin{equation*}
\widetilde{Z^G_M}:=\{(g, x)\in G\times G\mid g^x\in {MU^{\pm}}\}\cong MU^{\pm}\times G;
\end{equation*}
they admit the $G$-action $y\in G\colon (g,x)\mapsto (ygy^{-1},yx)$. Consider the $G$-equivariant smooth morphism with connected fibres
\begin{equation*}
\pi'_{M,G}\colon\widetilde{Z^G_M}\longrightarrow Z^G_M,\quad (g,x)\mapsto (g,xMU^{\pm});
\end{equation*}
this is a principal $MU^{\pm}$-fibration, where $MU^{\pm}$ acts on $\widetilde{Z^G_M}$ by $y'\in MU^{\pm}\colon(g,x)\mapsto (g,xy'^{-1})$. Moreover, note that the quotient $G\rightarrow G/MU^{\pm}$ is locally trivial: As $G^l$ is a connected unipotent group, it suffices to show that $G_l\rightarrow G_l/M_l$ is locally trivial, which follows from the fact that the multiplication morphism $\mathbf{U}_{\mathbf{P}}^{-}\times\mathbf{U}_{\mathbf{P}}\times\mathbf{M}\rightarrow\mathbf{G}$ is an open immersion (see \cite[XXVI 4.3.2]{SGA3}). Therefore $\pi'_{M,G}$ is a locally trivial principal fibration by $MU^{\pm}$. Consider the trivial $G$-action on $M$, then we get another $G$-equivariant smooth morphism with connected fibres
\begin{equation*}
\pi_{M,G}\colon \widetilde{Z^G_M}\longrightarrow M,\quad (g,x)\mapsto\pi_M(g^x),
\end{equation*}
where $\pi_M$ is the projection from $MU^{\pm}$ to $M$. Note that the action of $MU^{\pm}$ on $\widetilde{Z^G_M}$ induces an action of $MU^{\pm}$ on $M$, which is compatible with the conjugation action of $M$ on $M$, so $\pi_{M,G}$ is also $MU^{\pm}$-equivariant. Now we get a diagram
\begin{equation}\label{induction diagram}
\begin{tikzcd}
M & \widetilde{Z^G_M}\arrow{l}[swap]{\pi_{M,G}} \arrow{r}{\pi'_{M,G}} & Z_M^G  \arrow{r}{\pi''_{M,G}} & G,
\end{tikzcd}
\end{equation}
where $\pi''_{M,G}$ is the left projection (which is $G$-equivariant with respect to the conjugation action of $G$ on itself). 

\vspace{2mm} Consider \eqref{induction diagram}, if $K\in\mathcal{M}(M)$ is $M$-equivariant (see Remark~\ref{remark: basic properties of D(X) and M(X)} (P2)) with respect to the conjugation action, then from the above we see that $\widetilde{\pi_{M,G}}K$ is $MU^{\pm}$-equivariant and $G$-equivariant. Moreover, since $\pi'_{M,G}$ is a locally trivial principal fibration by $MU^{\pm}$, there is a unique (up to isomorphisms) perverse sheaf (see Remark~\ref{remark: basic properties of D(X) and M(X)} (P3))
\begin{equation}\label{define (-)_{M,G}}
K_{M,G}\in\mathcal{M}(Z^G_M)
\end{equation} 
such that $\widetilde{\pi_{M,G}}K\cong \widetilde{\pi'_{M,G}}K_{M,G}$. Note that by Remark~\ref{remark: basic properties of D(X) and M(X)} (P1) and the $G$-equivariance of $\widetilde{\pi_{M,G}}K$, this $K_{M,G}$ is $G$-equivariant.

\begin{defi}
Given $K\in\mathcal{M}(M)$ equivariant with respect to the conjugation action of $M$, along with the above notation, we put $\mathrm{ind}_M^GK:=R(\pi''_{M,G})_!K_{M,G}\in\mathcal{D}(G)$.
\end{defi}

Note that if $\mathrm{ind}_M^GK\in\mathcal{D}(G)$ is perverse, then it is $G$-equivariant by proper base change (\cite[1.7.5]{Lusztig_CharSh_I}) and the $G$-equivariance of $K_{M,G}$.

\begin{expl}
If $\mathbf{M}=\mathbf{G}$, then $\mathrm{ind}_G^GK=K$ by the equivariant property.
\end{expl}

\begin{expl}
If $\mathbf{M}=\mathbf{T}$ is the maximal torus, then $Z_T^G=Z_T$, and $\pi_{M,G}$ naturally factors through $b\colon Z^G_T\rightarrow T$ ($b$ is the morphism in \eqref{basic diagram}), which implies $K_{M,G}\cong\widetilde{b}K$. Thus in this simpler situation, we see that $\mathrm{ind}_T^GE^{\theta}[\dim T]\cong K^{\theta}[\dim G]$ for any ${\theta}\in\widehat{T^F}$.
\end{expl}

\begin{prop}\label{transitivity for induction}
Let $\mathbf{N}$ be a Levi subgroup of $\mathbf{M}$; denote by $N$ its Greenberg functor image. If $K\in\mathcal{M}(N)$ is an $N$-equivariant perverse sheaf such that $\mathrm{ind}_N^MK$ is a perverse sheaf, then
\begin{equation*}
\mathrm{ind}^G_N K\cong \mathrm{ind}^G_M\circ\mathrm{ind}^M_NK.
\end{equation*}
\end{prop}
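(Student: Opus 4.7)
The plan is to adapt Lusztig's argument for transitivity of induction of character sheaves \cite{Lusztig_CharSh}: introduce an interpolating variety that resolves both sides simultaneously, and conclude by a chain of base-change and descent identifications.

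First, I would fix compatible parabolic data for $\mathbf{M} \subset \mathbf{L} \subset \mathbf{G}$. Pick a parabolic $\mathbf{Q}$ of $\mathbf{L}$ with Levi $\mathbf{M}$, and take the parabolic $\mathbf{P}_{\mathbf{M}} := \mathbf{Q}\mathbf{U}_{\mathbf{P}}$ of $\mathbf{G}$. After the Greenberg functor and the level-$l$ arithmetic-radical construction, this produces a nested decomposition $MU^{\pm}_{G-M} = M \cdot U^{\pm}_{L-M} \cdot U^{\pm}_{G-L} \subset LU^{\pm}_{G-L}$, where $U^{\pm}_{L-M} := (U_{\mathbf{Q}})^l (U_{\mathbf{Q}}^{-})^l$ is the arithmetic radical for $M$ viewed inside $L$. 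As a consequence, the projection $\pi_M \colon MU^{\pm}_{G-M} \to M$ factors through $\pi_L \colon LU^{\pm}_{G-L} \to L$ followed by the inner projection $MU^{\pm}_{L-M} \to M$.

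Next, I would introduce the interpolating variety
\begin{equation*}
\widetilde{Y} := \{(g,x,y) \in G \times G \times L \mid g^x \in LU^{\pm}_{G-L},\ \pi_L(g^x)^y \in MU^{\pm}_{L-M}\}
\end{equation*}
together with three canonical morphisms
\begin{align*}
\phi_1 &\colon \widetilde{Y} \to \widetilde{Z^G_M}, & (g,x,y) &\mapsto (g, xy),\\
\phi_2 &\colon \widetilde{Y} \to \widetilde{Z^G_L}, & (g,x,y) &\mapsto (g, x),\\
\phi_3 &\colon \widetilde{Y} \to \widetilde{Z^L_M}, & (g,x,y) &\mapsto (\pi_L(g^x), y).
\end{align*}
A direct check using the nesting verifies that each $\phi_i$ lands as claimed, that $\pi_{M,G} \circ \phi_1 = \pi_{M,L} \circ \phi_3$, and that the square formed by $\phi_2, \phi_3, \pi_{L,G}, \pi_{M,L}$ is cartesian. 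Moreover, each $\phi_i$ is smooth with connected fibres of computable dimension, so the shifted-pullback notation $\widetilde{\phi_i}$ is available and the dimension identity $\dim \widetilde{Y} = \dim \widetilde{Z^G_L} + \dim \widetilde{Z^L_M} - \dim L = \dim \widetilde{Z^G_M} + \dim L$ makes the shifts balance in the computation below.

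The transitivity then follows by a chain of base-change and descent steps. Starting from $\mathrm{ind}^G_L \circ \mathrm{ind}^L_M K = R(\pi''_{L,G})_! (\mathrm{ind}^L_M K)_{L,G}$, I would (i) pull back along $\pi'_{L,G}$ to recover $\widetilde{\pi_{L,G}} (\mathrm{ind}^L_M K)$ on $\widetilde{Z^G_L}$; (ii) pull back further via $\phi_2$ and use the cartesian square to rewrite the result as a $\phi_3$-pullback of $\widetilde{\pi_{M,L}} K$; (iii) invoke $\pi_{M,G} \circ \phi_1 = \pi_{M,L} \circ \phi_3$ to identify this with a $\phi_1$-pullback of $\widetilde{\pi_{M,G}} K$; and (iv) descend uniquely through the principal fibrations $\pi'_{L,G}$ and $\pi'_{M,G}$, using equivariance, to reach $K_{M,G}$ on $Z^G_M$, from which a final proper pushforward via $\pi''_{M,G}$ yields $\mathrm{ind}^G_M K$. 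The main obstacle I anticipate is the meticulous bookkeeping of equivariance and descent data across three layers of principal fibrations; here the hypothesis that $\mathrm{ind}^L_M K$ is perverse is essential, since it guarantees both the existence of the intermediate descent $(\mathrm{ind}^L_M K)_{L,G}$ and, via the uniqueness clause of equivariant descent \cite[1.9.3]{Lusztig_CharSh}, its compatibility with the direct descent $K_{M,G}$ built from $K$ through $\widetilde{Y}$.
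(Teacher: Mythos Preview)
Your strategy is exactly the paper's (which in turn follows Lusztig \cite[4.2]{Lusztig_CharSh}): introduce an interpolating variety mapping to all three of $\widetilde{Z^G_M}$, $\widetilde{Z^G_L}$, $\widetilde{Z^L_M}$, set up cartesian squares, and combine proper base change with full faithfulness of the shifted pullbacks along the principal fibrations. The paper's interpolating variety is $X=\{(g,x,z)\in G\times G\times LU^{\pm}\mid g^x\in MU^{\pm}\}$ together with its quotient $Y$ by $MU^{\pm}$; your $\widetilde{Y}$ differs from this only by a change of coordinates and an irrelevant $U^{\pm}_{G-L}$-factor.

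There is, however, a gap at step~(ii). The square $(\phi_2,\phi_3,\pi_{L,G},\,\cdot\,)$ you describe is a commuting square of \emph{smooth} morphisms over $L$ (and the fourth arrow must be the first projection $\widetilde{Z^L_M}\to L$, i.e.\ $\pi''_{M,L}\circ\pi'_{M,L}$, not $\pi_{M,L}$, which lands in $M$). Pulling back $\widetilde{\pi_{L,G}}(\mathrm{ind}^L_MK)$ along $\phi_2$ and invoking that square only gives you $\widetilde{\phi_3}$ of the pullback of $\mathrm{ind}^L_MK$ to $\widetilde{Z^L_M}$, not of $\widetilde{\pi_{M,L}}K$: the proper pushforward $R(\pi''_{M,L})_!$ hidden inside $\mathrm{ind}^L_MK$ does not disappear under a pullback. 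What is actually needed is the paper's reduction. First factor $\pi''_{M,G}=\pi''_{L,G}\circ g_2$ via the natural map $g_2\colon Z^G_M\to Z^G_L$, $(g,xMU^{\pm})\mapsto(g,xLU^{\pm})$, and reduce the statement to $R(g_2)_!K_{M,G}\cong(\mathrm{ind}^L_MK)_{L,G}$ on $Z^G_L$. Then pull back along $\pi'_{L,G}$ and apply proper base change to the two cartesian squares over $\widetilde{Z^G_L}$ involving the \emph{proper} maps $g_2$ and $\pi''_{M,L}$; this converts both pushforwards into a common pushforward $R(g_1)_!$ from the intermediate quotient $Y$, which then cancels. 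Only after that does your identity $\pi_{M,G}\circ\phi_1=\pi_{M,L}\circ\phi_3$ on the interpolating variety finish the argument. Without the map $g_2$ and this pushforward reduction, your steps (i)--(iv) are a chain of pullbacks that leaves you with matching objects on $\widetilde{Y}$ but no mechanism to compare their images on $G$.
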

\begin{proof}
The argument is an analogue to the one in \cite[4.2]{Lusztig_CharSh_I}. We have a commutative diagram
\begin{equation*}
\begin{tikzcd}
&N&&\\
\widetilde{Z_N^M}\arrow{ru}{\pi_{N,M}} \arrow{d}[swap]{\pi'_{N,M}} & X  \arrow{l}[swap]{h_1} \arrow{u}[swap]{d}\arrow{r}{h_2}\arrow{d}{f} & \widetilde{Z_N^G}\arrow{lu}[swap]{\pi_{N,G}}\arrow{d}{\pi'_{N,G}} &\\
Z_N^M \arrow{d}[swap]{\pi''_{N,M}} & Y\arrow{l}[swap]{e_1}\arrow{r}{e_2} \arrow{d}{g_1}  & Z_N^G \arrow{d}{g_2}  \arrow{dr}{\pi''_{N,G}} &\\
M & \widetilde{Z_M^G}\arrow{l}[swap]{\pi_{M,G}}\arrow{r}{\pi'_{M,G}} & {Z_M^G}\arrow{r}{\pi''_{M,G}} & G,
\end{tikzcd}
\end{equation*}
where $X:=\{(g,x,z)\in G\times G\times MU^{\pm}\mid g^x\in NU^{\pm}\}$, and $Y$ is the quotient of $X$ by the $NU^{\pm}$-action given by $q\in NU^{\pm}\colon (g,x,z)\mapsto (g,xq^{-1},zq^{-1})$; $f$ denotes the quotient morphism. The other morphisms are as below:
\begin{itemize}
\item[] $d\colon (g,x,z)\mapsto \pi_N(g^x)$, where $\pi_N$ is the projection from $NU^{\pm}$ to $N$;

\item[] $h_1$ is $(g,x,z)\mapsto (\pi_M(g^{xz^{-1}}),\pi_M(z))$;

\item[] $h_2\colon (g,x,z)\mapsto (g,x)$;

\item[] $e_1\colon (g,x,z)\mapsto (\pi_M(g^{xz^{-1}}),\pi_M(z)NU^{\pm}_M)\in  Z_N^M\subseteq M\times M/NU_M^{\pm}$;

\item[] $e_2\colon (g,x,z)\mapsto (g,xNU^{\pm})\in  Z_N^G=\{(g,x)\in G\times G/NU^{\pm}\mid g^x\in NU^{\pm} \}$;

\item[] $g_1\colon (g,x,z)\mapsto (g,xz^{-1})$;

\item[] $g_2\colon (g,xNU^{\pm})\mapsto (g,xMU^{\pm})$.
\end{itemize}
Note in the above diagram, the two bottom squares are cartesian, and $e_i$ and $f$ are smooth morphisms with connected fibres.

\vspace{2mm} To show that $\mathrm{ind}_N^GK\cong\mathrm{ind}_M^G\mathrm{ind}_N^MK$, in other words, to show that 
\begin{equation*}
R(\pi''_{M,G})_!R(g_2)_!K_{N,G}\cong R(\pi''_{M,G})_!(\mathrm{ind}_N^MK)_{M,G},
\end{equation*}
it suffices to show that
\begin{equation*}
R(g_2)_!K_{N,G}\cong(\mathrm{ind}_N^MK)_{M,G}.
\end{equation*}
(Here, for a perverse sheaf $A$, the notation $A_{B,C}$ is defined in the same way as in \eqref{define (-)_{M,G}}, by formally replacing $M$ and $G$ by $B$ and $C$, respectively.) Since $\widetilde{\pi'_{M,G}}$ is fully faithful on perverse sheaves (see Remark~\ref{remark: basic properties of D(X) and M(X)} (P1)), this assertion can be deduced by showing that
\begin{equation}\label{temp4}
\widetilde{\pi'_{M,G}}R(g_2)_!K_{N,G}\cong\widetilde{\pi'_{M,G}}(\mathrm{ind}_N^MK)_{M,G}.
\end{equation}
Note that
\begin{equation*}
\widetilde{\pi'_{M,G}}(\mathrm{ind}_N^MK)_{M,G}\cong\widetilde{\pi_{M,G}}\mathrm{ind}_N^MK\cong \widetilde{\pi_{M,G}} R({\pi}''_{N,M})_!K_{N,M}
\end{equation*}
by the definition of $(\mathrm{ind}_N^MK)_{M,G}$, so \eqref{temp4} is equivalent to
\begin{equation*}
R(g_1)_!\widetilde{e_2}K_{N,G}\cong R(g_1)_!\widetilde{e_1}K_{N,M}
\end{equation*}
by applying the proper base change (on both sides). Thus we only need to show that $\widetilde{e_2}K_{N,G}\cong\widetilde{e_1}K_{N,M}$, which is equivalent to showing
\begin{equation}\label{temp5}
\widetilde{f}\widetilde{e_2}K_{N,G}\cong\widetilde{f}\widetilde{e_1}K_{N,M}
\end{equation}
(by the full faithfulness of $\widetilde{f}$ on perverse sheaves). By the definitions of $K_{N,M}$ and $K_{N,G}$, \eqref{temp5} follows from
\begin{equation*}
\widetilde{h_1}\widetilde{\pi_{N,M}}K=\widetilde{d}K=\widetilde{h_2}\widetilde{\pi_{N,G}}K.
\end{equation*}
This completes the proof.
\end{proof}

Now we turn to the restriction functor.

\begin{defi}
Consider the diagram
\begin{equation}\label{restriction diagram}
\begin{tikzcd}
M & MU^{\pm} \arrow{l}[swap]{\pi_M} \arrow{r}[hook]{i} & G,
\end{tikzcd}
\end{equation}
where $i$ is the natural closed immersion and $\pi_M$ is the projection from $MU^{\pm}\cong M\times U^{\pm}_{G-M}$ to $M$. For any $K\in\mathcal{D}(G)$, we put $\mathrm{res}^G_MK:=R(\pi_M)_!i^*K\in\mathcal{D}(M)$. 
\end{defi}

\begin{prop}\label{transitivity for restriction}
Suppose that $\mathbf{N}$ is a Levi subgroup of $\mathbf{M}$, and denote by $N$ its Greenberg functor image, then
\begin{equation*}
\mathrm{res}^G_N\cong \mathrm{res}^M_N\circ\mathrm{res}^G_M.
\end{equation*}
\end{prop}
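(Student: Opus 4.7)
The plan is to mirror the diagrammatic strategy used for induction in Proposition~\ref{transitivity for induction}: build a commutative diagram linking the restriction diagrams for $G\supseteq L$, $L\supseteq M$, and $G\supseteq M$, identify one square as cartesian, and conclude by base change for $R(-)_!$.

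First I would fix compatible parabolics: let $\mathbf{Q}\subseteq\mathbf{L}$ be a parabolic with Levi $\mathbf{M}$; then $\mathbf{Q}\mathbf{U}_{\mathbf{P}}$ is a parabolic of $\mathbf{G}$ with Levi $\mathbf{M}$ and unipotent radical $\mathbf{U}_{\mathbf{Q}}\mathbf{U}_{\mathbf{P}}$ (opposite radical $\mathbf{U}_{\mathbf{Q}}^{-}\mathbf{U}_{\mathbf{P}}^{-}$). The key structural fact is then
\[
U^{\pm}_{G-M}\;=\;U^{\pm}_{L-M}\cdot U^{\pm}_{G-L},
\]
whence $MU^{\pm}_{G-M}=MU^{\pm}_{L-M}\cdot U^{\pm}_{G-L}$. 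Since $r=2l$, the $l$-th congruence subgroup $G^l$ is abelian (any commutator lies in $G^{2l}=G^r=1$), so in the expression $(U_{\mathbf{Q}}U_{\mathbf{P}})^l(U_{\mathbf{Q}}^{-}U_{\mathbf{P}}^{-})^l$ defining $U^{\pm}_{G-M}$ the four level-$l$ pieces commute freely and can be regrouped as $(U_{\mathbf{Q}})^l(U_{\mathbf{Q}}^{-})^l\cdot(U_{\mathbf{P}})^l(U_{\mathbf{P}}^{-})^l=U^{\pm}_{L-M}\cdot U^{\pm}_{G-L}$. Together with the openness of the big cell this also yields an isomorphism of varieties $M\times U^{\pm}_{L-M}\times U^{\pm}_{G-L}\xrightarrow{\sim}MU^{\pm}_{G-M}$.

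Next I would consider the square
\[
\begin{tikzcd}
MU^{\pm}_{G-M}\arrow{d}[swap]{\alpha}\arrow[hook]{r}{\beta} & LU^{\pm}_{G-L}\arrow{d}{\pi^G_L}\\
MU^{\pm}_{L-M}\arrow[hook]{r}{i^L_M} & L
\end{tikzcd}
\]
in which $\beta$ is the closed immersion and $\alpha$ is the projection killing the $U^{\pm}_{G-L}$-factor. The decomposition above shows this square is cartesian. Moreover, the projection $\pi^G_M$ and inclusion $i^G_M$ from the restriction diagram for $G\supseteq M$ factor as $\pi^G_M=\pi^L_M\circ\alpha$ and $i^G_M=i^G_L\circ\beta$. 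Base change for $R(-)_!$ on the cartesian square gives $(i^L_M)^*R(\pi^G_L)_!\cong R\alpha_!\,\beta^*$; precomposing with $(i^G_L)^*$ and postcomposing with $R(\pi^L_M)_!$ yields
\[
\mathrm{res}^L_M\circ\mathrm{res}^G_L(K)=R(\pi^L_M)_!(i^L_M)^*R(\pi^G_L)_!(i^G_L)^*K\cong R(\pi^L_M\circ\alpha)_!(i^G_L\circ\beta)^*K=\mathrm{res}^G_M(K),
\]
as desired.

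The only real obstacle is establishing the decomposition $U^{\pm}_{G-M}=U^{\pm}_{L-M}\cdot U^{\pm}_{G-L}$ cleanly at the variety level; once the abelianness of $G^l$ is noted, everything else is formal. A minor subtlety worth flagging is that the $\mathrm{res}$ functors depend implicitly on the chosen parabolic containing the Levi, so the three functors in the statement should be read with the compatible choices $\mathbf{P}\supseteq\mathbf{L}$ in $\mathbf{G}$, $\mathbf{Q}\supseteq\mathbf{M}$ in $\mathbf{L}$, and $\mathbf{Q}\mathbf{U}_{\mathbf{P}}\supseteq\mathbf{M}$ in $\mathbf{G}$.
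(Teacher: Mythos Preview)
Your proof is correct and follows essentially the same route as the paper: both arguments hinge on the cartesian square
\[
\begin{tikzcd}
MU^{\pm}\arrow{r}\arrow{d} & LU^{\pm}\arrow{d}{\pi_L}\\
MU^{\pm}_L\arrow{r} & L
\end{tikzcd}
\]
(your $MU^{\pm}_{G-M}$, $MU^{\pm}_{L-M}$, $LU^{\pm}_{G-L}$ are the paper's $MU^{\pm}$, $MU^{\pm}_L$, $LU^{\pm}$), followed by proper base change and the factorisations $\pi_M=\pi'_M\circ\alpha$, $i=i_1\circ\beta$. The paper simply asserts the diamond is cartesian, whereas you supply the underlying decomposition $U^{\pm}_{G-M}=U^{\pm}_{L-M}\cdot U^{\pm}_{G-L}$ via the abelianness of $G^l$, and you make explicit the compatible choice of parabolics; these are welcome clarifications but not a different argument.
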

\begin{proof}
Consider the following commutative diagram
\begin{equation*}
\begin{tikzcd}
N && NU^{\pm}\arrow{ll}[swap]{\pi_N} \arrow{rr}[hook]{i} \arrow{ld}[swap]{\pi'}\arrow{rd}[hook]{i'_2} && G  \\
& NU^{\pm}_M\arrow{lu}{\pi'_N}\arrow{rd}[hook,swap]{i_2} && MU^{\pm}\arrow{ld}{\pi_M}\arrow{ur}[hook,swap]{i_1} & \\
&& M &&,
\end{tikzcd}
\end{equation*}
Where $\pi'$ and $\pi'_N$ are the natural projections, and $i_1$, $i_2$, and $i'_2$ are the natural inclusions. Note that the middle diamond is cartesian, so by the proper base change theorem we have
\begin{equation*}
\begin{split}
\mathrm{res}^G_N
&=R(\pi_N)_!i^*\\
&=R(\pi'_N)_!R(\pi')_!(i'_2)^*i_1^*\\
&=R(\pi'_N)_!(i_2)^*R(\pi_M)_!i_1^*\\
&=\mathrm{res}^M_N\circ\mathrm{res}^G_M.
\end{split}
\end{equation*}
Thus the transitive property holds.
\end{proof}

\section{Frobenius reciprocity}\label{frobenius reci}

In this section we will be concerned with a Frobenius reciprocity. 

\vspace{2mm} In the level $r=1$ case, if $A$ is a character sheaf, then Lusztig proved that $\mathrm{res}^G_MA\in\mathcal{D}^{\leq0}(M)$ \cite[4.4 (c)]{Lusztig_CharSh_I}, and based on this property he established the Frobenius reciprocity (see \cite[4.4]{Lusztig_CharSh_I}). We expect that our generic character sheaves also satisfy this property (after a shifting). However, the method in the $r=1$ case does not work well in the $r$ even case; one of the obstructions is that the morphism $a$ in \eqref{basic diagram} is not proper. In any case, in the below we show that, when such a property holds, there is still a Frobenius reciprocity for $r$ even:

\begin{prop}\label{Frobenius formula}
If $A_1\in\mathcal{M}(M)$ is $M$-equivariant, and if $A\in\mathcal{M}(G)$ is $G$-equivariant such that $\mathrm{res}^G_MA\in\mathcal{D}^{\leq0}(M)$, then
\begin{equation*}
\mathrm{Hom}_{\mathcal{D}(M)}(\mathrm{res}^G_MA,A_1)\cong\mathrm{Hom}_{\mathcal{D}(G)}(A,\mathbb{D}_G\circ\mathrm{ind}_M^G\circ\mathbb{D}_MA_1).
\end{equation*}
\end{prop}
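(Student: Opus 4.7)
The plan is to rewrite both sides of the claimed isomorphism as $\mathrm{Hom}$-spaces on a common variety via standard six-functor adjunctions, and then to match them using the geometry relating the restriction diagram to the induction diagram.

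First I would simplify the right-hand side via Verdier duality. The shifted smooth pullbacks $\widetilde{\pi}_{L,G}$ and $\widetilde{\pi}'_{L,G}$ commute with $\mathbb{D}$ up to Tate twist (which we ignore), and $\widetilde{\pi}'_{L,G}$ is fully faithful on perverse sheaves by \cite[1.8.3]{Lusztig_CharSh}. Hence the defining relation $\widetilde{\pi}'_{L,G}(A_1)_{L,G}\cong\widetilde{\pi}_{L,G}A_1$ forces $\mathbb{D}_{Z_L^G}(\mathbb{D}_L A_1)_{L,G}\cong (A_1)_{L,G}$, and combining this with $\mathbb{D}_G R(\pi''_{L,G})_!\cong R(\pi''_{L,G})_*\mathbb{D}_{Z_L^G}$ yields
\begin{equation*}
\mathbb{D}_G\circ\mathrm{ind}_L^G\circ\mathbb{D}_L A_1 \cong R(\pi''_{L,G})_*(A_1)_{L,G}.
\end{equation*}
The $((\pi''_{L,G})^*,R(\pi''_{L,G})_*)$ adjunction then rewrites the right-hand side of the claimed isomorphism as $\mathrm{Hom}_{\mathcal{D}(Z_L^G)}((\pi''_{L,G})^* A,(A_1)_{L,G})$. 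Symmetrically, the $(R(\pi_L)_!,(\pi_L)^!)$ adjunction followed by $(i^*,Ri_*)$ rewrites the left-hand side as $\mathrm{Hom}_{\mathcal{D}(G)}(A,Ri_*(\pi_L)^! A_1)$, where $(\pi_L)^! A_1\cong(\pi_L)^* A_1[2d]$ (up to Tate twist) because $\pi_L$ is smooth with connected fibres of relative dimension $d:=\dim U^\pm_{G-L}$.

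To bridge the two sides I would introduce the natural closed immersion $j\colon LU^\pm\hookrightarrow Z_L^G$, $h\mapsto(h,LU^\pm)$, which satisfies $\pi''_{L,G}\circ j=i$ and along which the principal $LU^\pm$-fibration $\pi'_{L,G}$ pulls back to the trivial bundle $LU^\pm\times LU^\pm\to LU^\pm$. Unwinding the composition $\pi_{L,G}\circ \widetilde{j}$ on this trivial cover (where $\widetilde j$ is the base change of $j$), using commutativity of $U^\pm$ and the Levi decomposition, gives $\pi_{L,G}(\widetilde{j}(h,x))=x_L^{-1}\pi_L(h)x_L$, so the $L$-equivariance of $A_1$ together with the defining relation $\widetilde{\pi}'_{L,G}(A_1)_{L,G}\cong\widetilde{\pi}_{L,G}A_1$ identifies $j^*(A_1)_{L,G}$ with $(\pi_L)^* A_1[2d]\cong(\pi_L)^! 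A_1$ (the shifts match, as $\dim G-\dim L=2d$). By the $(j^*,Rj_*)$ adjunction this yields a canonical morphism $(A_1)_{L,G}\to Rj_*(\pi_L)^! A_1$, and pushing forward along $\pi''_{L,G}$ and using $i=\pi''_{L,G}\circ j$ produces a canonical morphism
\begin{equation*}
R(\pi''_{L,G})_*(A_1)_{L,G}\longrightarrow Ri_*(\pi_L)^! A_1
\end{equation*}
in $\mathcal{D}(G)$. The desired Frobenius reciprocity follows upon applying $\mathrm{Hom}_{\mathcal{D}(G)}(A,-)$.

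The main obstacle I expect is verifying that this last morphism becomes an isomorphism after $\mathrm{Hom}_{\mathcal{D}(G)}(A,-)$: the two complexes need not be isomorphic on the nose, since away from $i(LU^\pm)$ the perverse descent defining $(A_1)_{L,G}$ contributes in a way that has no counterpart on the restriction side. The hypothesis $\mathrm{res}^G_L A\in\mathcal{D}^{\leq 0}(L)$ is precisely what should kill those extra contributions when paired with $A$; pinning down the cone of the comparison morphism and verifying that it is orthogonal to $A$ (in the derived sense) under this perverse t-structure hypothesis will be the technical core of the argument.
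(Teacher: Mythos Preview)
Your initial reductions are fine: the Verdier-duality computation giving $\mathbb{D}_G\circ\mathrm{ind}_L^G\circ\mathbb{D}_LA_1\cong R(\pi''_{L,G})_*(A_1)_{L,G}$ is correct, as are the adjunction rewritings of both sides. The gap is exactly where you say it is, and it is a real one. Restricting along the closed immersion $j\colon LU^{\pm}\hookrightarrow Z_L^G$ and analyzing the cone of $(A_1)_{L,G}\to Rj_*(\pi_L)^!A_1$ is not how the hypothesis $\mathrm{res}^G_LA\in\mathcal{D}^{\leq0}(L)$ gets used, and I do not see a direct way to carry out that cone analysis: the support of $(A_1)_{L,G}$ away from $j(LU^{\pm})$ is the whole of $Z_L^G$, and controlling its perverse cohomology there against an arbitrary $G$-equivariant $A$ is not a t-structure statement of the form you suggest.

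The paper avoids this altogether. Instead of restricting to $LU^{\pm}\subseteq Z_L^G$, it \emph{pulls back} to $L\times G$ along the smooth projection $\gamma\colon L\times G\to L$, using that $\widetilde{\gamma}$ is fully faithful on the relevant Hom (\cite[1.8.2]{Lusztig_CharSh}). Proper base change along the cartesian square $(\phi,\gamma,\pi_L,\theta')$ and then the $G$-equivariance of $A$ (which turns $\zeta'=i\circ\theta'$ into the conjugation map $\zeta$) rewrite $\widetilde{\gamma}\,\mathrm{res}^G_LA$ as $\widetilde{\beta}\bigl(Rf_!(\pi''_{L,G})^*A[\dim U^{\pm}_{G-L}]\bigr)$, where $\beta\colon L\times G\to LU^{\pm}\backslash(L\times G)$ is the quotient by the twisted $LU^{\pm}$-action and $f$ is the induced smooth map from $Z_L^G$ to that quotient. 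Likewise $\widetilde{\gamma}A_1=\widetilde{\beta}A_1'$ by equivariant descent. Now the hypothesis $\mathrm{res}^G_LA\in\mathcal{D}^{\leq0}(L)$ is exactly the input to \cite[1.8.2]{Lusztig_CharSh} (descent along the locally trivial fibration $\beta$ when the source lies in $\mathcal{D}^{\leq0}$ and the target is perverse), which drops $\widetilde{\beta}$ from both arguments; the ordinary adjunctions $(Rf_!,f^!)$ and $((\pi''_{L,G})^*,R(\pi''_{L,G})_*)$ then finish the identification with the right-hand side. The point is that the hypothesis is used for a descent/full-faithfulness step, not an orthogonality-of-cone argument; you should replace the closed immersion $j$ by the smooth cover $L\times G$ and exploit $G$-equivariance of $A$ there.
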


\begin{proof}
We combine the Verdier duality with the methods in \cite[4.4]{Lusztig_CharSh_I}. Consider the commutative diagram
\begin{equation*}
\begin{tikzcd}
& {Z_M^G} \arrow{dl}[swap]{f}\arrow{r}{\pi''_{M,G}} & G    & \\
MU^{\pm}\backslash M\times G && MU^{\pm}\times G \arrow{lu}[swap]{\rho}\arrow{ld}[swap]{\phi}\arrow{rd}{\theta'}\arrow{r}{\zeta'}\arrow{u}[swap]{\zeta} \arrow{dd}{\theta}  & G   \\
& M\times G\arrow{lu}{\beta}\arrow{rd}[swap]{\gamma} &  & MU^{\pm}\arrow{ld}{\pi_M}\arrow{u}[hook,swap]{i}\\
&&M&,
\end{tikzcd}
\end{equation*}
where $i$ and $\pi_M$ are as in \eqref{restriction diagram}, $\pi''_{M,G}$ is as in \eqref{induction diagram}, and $\beta$ is the quotient morphism of the $MU^{\pm}$-action on $M\times G$ given by
\begin{equation*}
y\in MU^{\pm}\colon (g,x)\mapsto (\pi_M(y)g\pi_M(y)^{-1},xy^{-1});
\end{equation*}
and the other morphisms are as below:
\begin{itemize}
\item[] $\zeta'$, $\theta'$, and $\gamma$ are left projections;

\item[] $\theta\colon (g,x)\mapsto \pi_M(g)$;

\item[] $\phi:=\pi_M\times\mathrm{id}$;

\item[] $\zeta\colon (g,x)\mapsto xgx^{-1}$.
\end{itemize}
Concerning $f$ and $\rho$, first recall that $Z_M^G=\{(g,x)\in G\times G/MU^{\pm}\mid g^x\in MU^{\pm}\}$; we put
\begin{itemize}
\item[] $f\colon (g,xMU^{\pm})\mapsto (\pi_M(g^x),x)\mod MU^{\pm}$;

\item[] $\rho\colon (g,x)\mapsto (xgx^{-1},xMU^{\pm})$.
\end{itemize}
Note that in this diagram, by identifying $MU^{\pm}\times G$ with $\widetilde{Z_{M}^G}$ we see that $\rho$ becomes $\pi'_{M,G}$, so ($\phi$, $\gamma$, $\pi_M$, $\theta'$) and ($f$, $\beta$, $\rho$, $\phi$) form two cartesian diagrams. Also note that $\beta$ is a locally trivial fibration (as $\phi$ and $\rho$ are locally trivial fibrations; see the arguments above \eqref{induction diagram}), and $f$ is smooth with connected fibres ($\cong U^{\pm}_{G-M}$).

\vspace{2mm} We have (Remark~\ref{remark: basic properties of D(X) and M(X)} (P1))
\begin{equation}\label{temp1}
\mathrm{Hom}_{\mathcal{D}(M)}\left(\mathrm{res}^G_MA,A_1\right)\cong\mathrm{Hom}_{\mathcal{D}(M\times G)}\left(\widetilde{\gamma}\mathrm{res}^G_MA,\widetilde{\gamma}A_1\right).
\end{equation}
Consider the right hand side; by the proper base change theorem we see that
\begin{equation*}
\begin{split}
\widetilde{\gamma}\mathrm{res}^G_MA
=&\gamma^*R(\pi_M)_!i^*A[\dim G]\\
=&R\phi_!(\theta')^*i^*A[\dim G]\\
=&R\phi_!(\zeta')^*A[\dim G],
\end{split}
\end{equation*}
which is actually $R\phi_!(\zeta)^*A[\dim G]$ by the equivariance of $A$, and then again by the proper base change we get
\begin{equation*}
\begin{split}
R\phi_!(\zeta)^*A[\dim G]
=&R\phi_!\rho^*(\pi''_{M,G})^*A[\dim G]\\
=&\beta^*Rf_!(\pi''_{M,G})^*A[\dim G]\\
=&\widetilde{\beta}Rf_!(\pi''_{M,G})^*A[\dim U^{\pm}_{G-M}].
\end{split}
\end{equation*}
On the other hand, since $\gamma$ is $MU^{\pm}$-equivariant with respect to the conjugation action of $MU^{\pm}$ composed by $\pi_M$, we see that $\widetilde{\gamma}A_1$ is $MU^{\pm}$-equivariant, thus $\widetilde{\gamma}A_1=\widetilde{\beta}A'_1$ for some $A'_1\in\mathcal{M}(MU^{\pm}\backslash M\times G)$ (Remark~\ref{remark: basic properties of D(X) and M(X)} (P3)), so \eqref{temp1} becomes
\begin{equation}\label{temp2}
\mathrm{Hom}_{\mathcal{D}(M)}\left(\mathrm{res}^G_MA,A_1\right)\cong\mathrm{Hom}_{\mathcal{D}(M\times G)}\left(\widetilde{\beta}Rf_!(\pi''_{M,G})^*A[\dim U^{\pm}_{G-M}],\widetilde{\beta}A'_1\right).
\end{equation}
And, the condition $\mathrm{res}^G_MA\in\mathcal{D}^{\leq0}(M)$ becomes (Remark~\ref{remark: basic properties of D(X) and M(X)} (P1))
\begin{equation*}
\widetilde{\beta}Rf_!(\pi''_{M,G})^*A[\dim U^{\pm}_{G-M}]\in\mathcal{D}^{\leq0}(M\times G),
\end{equation*}
thus Remark~\ref{remark: basic properties of D(X) and M(X)} (P1) and the adjunctions imply that
\begin{equation*}
\begin{split}
\eqref{temp2}
=&\mathrm{Hom}_{\mathcal{D}(MU^{\pm}\backslash M\times G)}\left(Rf_!(\pi''_{M,G})^*A[\dim U^{\pm}_{G-M}],A'_1\right)\\
=&\mathrm{Hom}_{\mathcal{D}(G)}\left(A,R(\pi''_{M,G})_*f^!A'_1[-\dim U^{\pm}_{G-M}]\right).
\end{split}
\end{equation*}

Now we want to apply the Verdier duality. First note that, if $K$ is an equivariant perverse sheaf on $M$, then by the compatibilities between the Verdier duality functor and (proper) pull-backs (see e.g.\ \cite[E.4]{Boyarchenko_Drinfeld2010CharSh}), its Verdier dual $\mathbb{D}_MK$ is also equivariant. We have
\begin{equation}\label{temp3}
\begin{split}
R(\pi''_{M,G})_*f^!A'_1[-\dim U^{\pm}_{G-M}]
&=R(\pi''_{M,G})_*f^!(\mathbb{D}_{MU^{\pm}\backslash M\times G}\circ\mathbb{D}_{MU^{\pm}\backslash M\times G})A'_1[-\dim U^{\pm}_{G-M}]\\
&=\left(\mathbb{D}_G\circ R(\pi''_{M,G})_!f^*\mathbb{D}_{MU^{\pm}\backslash M\times G}A'_1\right)[-\dim U^{\pm}_{G-M}]\\
&=\mathbb{D}_G\left(R(\pi''_{M,G})_!f^*(\mathbb{D}_{MU^{\pm}\backslash M\times G}A'_1)[\dim U^{\pm}_{G-M}]\right)\\
&=\mathbb{D}_GR(\pi''_{M,G})_!\widetilde{f}\mathbb{D}_{MU^{\pm}\backslash M\times G}A'_1
\end{split}
\end{equation}
by the Verdier duality (see e.g.\ \cite[E.4]{Boyarchenko_Drinfeld2010CharSh}). 

\vspace{2mm} Meanwhile, since $\widetilde{\gamma}A_1=\widetilde{\beta}A'_1$, we see that
\begin{equation*}
\begin{split}
\widetilde{\gamma}\mathbb{D}_MA_1
&={\gamma}^*(\mathbb{D}_MA_1)[\dim G]=\mathbb{D}_{M\times G}(\gamma^!A_1[-\dim G])=\mathbb{D}_{M\times G}\widetilde{\gamma}A_1\\
&=\mathbb{D}_{M\times G}\widetilde{\beta}A'_1=\beta^!(\mathbb{D}_{MU^{\pm}\backslash M\times G}A'_1)[-\dim MU^{\pm}]=\widetilde{\beta}\mathbb{D}_{MU^{\pm}\backslash M\times G}A'_1,
\end{split}
\end{equation*}
so
\begin{equation*}
\widetilde{\rho}(\mathbb{D}_MA_1)_{M,G}\cong\widetilde{\phi}\widetilde{\gamma}\mathbb{D}_MA_1\cong\widetilde{\phi}\widetilde{\beta}\mathbb{D}_{MU^{\pm}\backslash M\times G}A'_1\cong\widetilde{\rho}\widetilde{f}\mathbb{D}_{MU^{\pm}\backslash M\times G}A'_1.
\end{equation*}
Thus by the uniqueness of $(\mathbb{D}_MA_1)_{M,G}$ (see \eqref{define (-)_{M,G}}) we get $\widetilde{f}\mathbb{D}_{MU^{\pm}\backslash M\times G}A'_1\cong(\mathbb{D}_MA_1)_{M,G}$. Therefore 
\begin{equation*}
\eqref{temp3}=\mathbb{D}_GR(\pi''_{M,G})_!(\mathbb{D}_MA_1)_{M,G}=\mathbb{D}_G\circ\mathrm{ind}_M^G\circ\mathbb{D}_MA_1.
\end{equation*}
This completes the proof.
\end{proof}

\section{Further remarks on function fields}\label{further remarks}

Throughout this section we focus on the function field case $\mathcal{O}={\mathbb{F}}_q[[\pi]]$. Note that a very special phenomenon happened in this case is the existence of a natural section $G_1\rightarrow G$ to the reduction map.

\vspace{2mm} In this situation, Lusztig proposed in \cite{Lusz_CharSh_Generalizations} the generic principal series character sheaves (here ``principal series'' means that the involved $T$ is contained in an $F$-stable $B$), and conjectured that these complexes are perverse; this construction is based on an analogue of the Grothendieck--Springer resolution. In \cite{Lusztig_2015_generic_CharSh}, Lusztig proved the conjecture for $r\leq4$, with $\mathrm{char}(\mathbb{F}_q)$ big enough; in the arguments, some complexes based on G\'erardin's representations are proved to be perverse (the construction of these complexes is a little bit different from the one presented here, but the resulting complexes are up to shifts isomorphic), and he showed that the principal series character sheaves coincide with these complexes (see also \cite{Kim_2016_PincipalSeriesComparison}, which extended this coincidence for a general $r$).

\vspace{2mm} Meanwhile, at level $r=2$, Fan considered in his PhD thesis \cite{Fan_Zhaobing_PhD_thesis} another construction of character sheaves, by a method different from ours. While in his construction the characteristic functions are not known, it is immediate from the definition that his complexes are perverse. It would be interesting to understand the relations between his construction and our construction.

\bibliographystyle{alpha}
\bibliography{zchenrefs}

\end{document}